\title{\vspace{-0.6cm} Two extremal problems on intersecting families}

\documentclass[11pt]{article}

\usepackage{amsmath, amssymb, amsthm, url, tikz, verbatim}

\oddsidemargin  0pt
\evensidemargin 0pt
\marginparwidth 40pt
\marginparsep 10pt
\topmargin 0pt
\headsep 10pt
\textheight 8.7in
\textwidth 6.5in

\newtheorem{theorem}{Theorem}[section]

\newtheorem{lemma}[theorem]{Lemma}

\newtheorem{conjecture}[theorem]{Conjecture}

\newcommand{\dive}{{\textup{div}}}

\usetikzlibrary{calc}
\usetikzlibrary{patterns}
\usetikzlibrary{decorations.markings}
\usetikzlibrary{arrows,shapes.geometric,%
	decorations.pathreplacing,shapes,shadows}

\date{}

\author{
Hao Huang \thanks{
Department of Math and CS, Emory University, Atlanta, GA 30322, USA.
Email: hao.huang@emory.edu. Research supported in part by the Collaboration Grants from the Simons Foundation.
}
}

\begin{document}
\maketitle
\abstract
In this short note, we address two problems in extremal set theory regarding intersecting families. The first problem is a question posed by Kupavskii: is it true that given two disjoint cross-intersecting families $\mathcal{A}, \mathcal{B} \subset \binom{[n]}{k}$, they must satisfy $\min\{|\mathcal{A}|, |\mathcal{B}|\} \le \frac{1}{2} \binom{n-1}{k-1}$? We give an affirmative answer for $n \ge 2k^2$, and construct families showing that this range is essentially the best one could hope for, up to a constant factor. The second problem is a conjecture of Frankl. It states that for $n \ge 3k$, the maximum diversity of an intersecting family $\mathcal{F} \subset \binom{[n]}{k}$ is equal to $\binom{n-3}{k-2}$. We are able to find a construction beating the conjectured bound for $n$ slightly larger than $3k$, which also disproves a conjecture of Kupavskii.

\section{Disjoint cross-intersecting families}
One of the most famous results in extremal set theory is the Erd\H os-Ko-Rado Theorem \cite{erdos-ko-rado}: for $n \ge 2k$, an intersecting family $\mathcal{F} \subset \binom{[n]}{k}$ has size at most $\binom{n-1}{k-1}$. The Erd\H os-Ko-Rado Theorem has many analogues and generalizations. One particularly interesting generalization is by considering two families instead of one. We say two families $\mathcal{A}$ and $\mathcal{B}$ are {\it cross-intersecting}, if for every $A \in \mathcal{A}$ and $B \in \mathcal{B}$, $A \cap B \neq \emptyset$. Pyber \cite{pyber} showed that when $n$ is large in $k, l$, for $\mathcal{A} \subset \binom{[n]}{k}$, $\mathcal{B} \subset \binom{[n]}{l}$, we have $|\mathcal{A}||\mathcal{B}| \le \binom{n-1}{k-1}\binom{n-1}{l-1}$. Later the same inequality for a precise range $n \ge 2\max\{k, l\}$ was established by Matsumoto and Tokushige \cite{m-t}. The Erd\H os-Ko-Rado Theorem follows immediately by setting $k=l$ and $\mathcal{A}=\mathcal{B}$. 

 Recently Kupavskii \cite{kupavskii_cross} asked the following question: given two cross-intersecting families $\mathcal{A}$ and $\mathcal{B}$ that are disjoint, is it true that 
$$\min \left\{|\mathcal{A}|, |\mathcal{B}| \right\}\} \le \frac{1}{2} \binom{n-1}{k-1}?$$
This bound, if true, is clearly tight. This is because we can always split the extremal example in Erd\H os-Ko-Rado Theorem, i.e.  a $1$-star $\mathcal{S}$, into two subfamilies $\mathcal{S}_1, \mathcal{S}_2$ as evenly as possible. Then $\mathcal{A}=\mathcal{S}_1$ and $\mathcal{B}=\mathcal{S}_2$ are cross-intersecting and disjoint, and each has about $\frac{1}{2}\binom{n-1}{k-1}$ subsets. In this section, we give a positive answer to this question for $n$ quadratic in $k$.

\begin{theorem} \label{thm_cross}
For $n \ge 2k^2$, given two disjoint cross-intersecting families $\mathcal{A}, \mathcal{B} \subset \binom{[n]}{k}$, we have
$$\min\{|\mathcal{A}|, |\mathcal{B}|\} \le \frac{1}{2} \binom{n-1}{k-1}.$$
\end{theorem}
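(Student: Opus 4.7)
The plan is to argue by contradiction: suppose $|\mathcal{A}|, |\mathcal{B}| > \frac{1}{2}\binom{n-1}{k-1}$. The aim is to show that both $\mathcal{A}$ and $\mathcal{B}$ must be contained in a common star $\mathcal{S}_x = \{F \in \binom{[n]}{k} : x \in F\}$, so that disjointness forces $|\mathcal{A}| + |\mathcal{B}| \le \binom{n-1}{k-1}$, contradicting the hypothesis.

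The quantitative input is the estimate, valid for $n \ge 2k^2$ and $k \ge 2$,
\[
\frac{\binom{n-k-1}{k-1}}{\binom{n-1}{k-1}} = \prod_{i=0}^{k-2}\left(1 - \frac{k}{n-1-i}\right) \ge \left(1 - \frac{1}{2k-1}\right)^{k-1} > \frac{1}{2},
\]
which uses $n-1-i \ge n-k+1 \ge k(2k-1)$ together with the elementary inequality $(1+1/(2(k-1)))^{k-1} < 2$. Combined with the Hilton-Milner theorem, this shows that any intersecting family $\mathcal{F} \subset \binom{[n]}{k}$ with $|\mathcal{F}| > \frac{1}{2}\binom{n-1}{k-1}$ must be a sub-star.

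The main technical step is to show that $\mathcal{A}$ and $\mathcal{B}$ are each intersecting. If $\mathcal{A}$ contained disjoint sets $A_1, A_2$, then every $B \in \mathcal{B}$ would meet both, but simply counting $k$-sets that meet two given disjoint $k$-sets yields roughly $k\binom{n-1}{k-1}$, too large to directly rule out $|\mathcal{B}| > \frac{1}{2}\binom{n-1}{k-1}$. I would instead invoke a cross-intersecting Hilton-Milner-type inequality (due to Frankl, provable by simultaneous shifting together with a structural analysis of shifted non-star families): whenever $\mathcal{A}$ is not contained in any single star, $|\mathcal{B}| \le \binom{n-1}{k-1} - \binom{n-k-1}{k-1} + 1$, which by the estimate above is less than $\frac{1}{2}\binom{n-1}{k-1} + 1$. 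After handling the integer parity this contradicts $|\mathcal{B}| > \frac{1}{2}\binom{n-1}{k-1}$, forcing $\mathcal{A}$, and by symmetry $\mathcal{B}$, to be a sub-star.

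Finally, suppose $\mathcal{A} \subseteq \mathcal{S}_x$ and $\mathcal{B} \subseteq \mathcal{S}_y$ with $x \ne y$. The $(k-1)$-uniform families $\mathcal{A}' = \{A \setminus \{x\} : A \in \mathcal{A},\, y \notin A\}$ and $\mathcal{B}' = \{B \setminus \{y\} : B \in \mathcal{B},\, x \notin B\}$ on the ground set $[n] \setminus \{x, y\}$ are cross-intersecting, and $|\mathcal{A}'|, |\mathcal{B}'| \ge \min\{|\mathcal{A}|, |\mathcal{B}|\} - \binom{n-2}{k-2}$, which for $n \ge 2k^2$ is still of order $\binom{n-1}{k-1}/2$. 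Applying Matsumoto-Tokushige on the $(n-2)$-element ground set gives $|\mathcal{A}'||\mathcal{B}'| \le \binom{n-3}{k-2}^2$; but $\binom{n-3}{k-2} = \Theta(\binom{n-1}{k-1}/k)$, so the product $|\mathcal{A}'||\mathcal{B}'|$ exceeds $\binom{n-3}{k-2}^2$ by a factor of order $k^2$, a contradiction. Thus $x = y$, and disjointness inside $\mathcal{S}_x$ finishes the proof. The main obstacle remains the cross-intersecting Hilton-Milner-type inequality in paragraph three, since the naive bound from ``$B$ meets two disjoint sets'' is weaker than needed by a factor $\Theta(k)$ and must be sharpened via a shifting-based or stability-based argument.
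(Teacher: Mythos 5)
Your argument stands or falls with the ``cross-intersecting Hilton--Milner-type inequality'' in your third paragraph, and as you have stated it that inequality is false: the hypothesis that $\mathcal{A}$ is not contained in any single star does not, by itself, force $|\mathcal{B}| \le \binom{n-1}{k-1}-\binom{n-k-1}{k-1}+1$. Concretely, for $k\ge 2$ fix a $(k-1)$-set $X$, two elements $a,b\notin X$, and a $k$-set $Y$ with $Y\cap X=\emptyset$ and $a,b\in Y$; let $\mathcal{A}=\{X\cup\{a\},\,X\cup\{b\},\,Y\}$ and $\mathcal{B}=\{B\in\binom{[n]}{k}: B\cap X\neq\emptyset,\ B\cap Y\neq\emptyset\}$ (delete the three sets of $\mathcal{A}$ from $\mathcal{B}$ if you want disjointness). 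Then $\mathcal{A}$ is intersecting, has no common element, and is cross-intersecting with $\mathcal{B}$; yet in the regime $n=\Theta(k^2)$ a constant proportion of all $k$-sets meet both $X$ and $Y$, so $|\mathcal{B}|=\Theta\bigl(\binom{n}{k}\bigr)$, which exceeds even $\binom{n-1}{k-1}=\frac{k}{n}\binom{n}{k}$ by a factor of order $k$, let alone the Hilton--Milner quantity. The true lemmas of this flavour all require a largeness hypothesis on $\mathcal{A}$ as well, and once you add that hypothesis the statement \emph{is} the crux of the theorem rather than an off-the-shelf black box: the paper proves exactly such a claim (if $\mathcal{A}\subset\binom{[n]}{k}$, $\mathcal{B}\subset\binom{[n]}{l}$ are cross-intersecting with $|\mathcal{A}|>\max\{k,l\}\binom{n-2}{k-2}$ and $|\mathcal{B}|>\max\{k,l\}\binom{n-2}{l-2}$, then all sets of both families contain a common element) by simultaneous shifting and an induction on $(n,k,l)$, and this is precisely the work your proposal defers. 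So the step you yourself flag as ``the main obstacle'' is a genuine gap, not a citation.

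The peripheral parts of your plan are sound. The estimate $\binom{n-k-1}{k-1}>\frac12\binom{n-1}{k-1}$ for $n\ge 2k^2$ is correct and is the same numerical source of the quadratic threshold that appears in the paper (which instead compares $k\binom{n-2}{k-2}$ with $\frac12\binom{n-1}{k-1}$), and your endgame --- excluding two distinct stars via Matsumoto--Tokushige on $[n]\setminus\{x,y\}$ and then using disjointness inside a common star --- checks out once both families are known to be sub-stars. Note, though, that the paper never establishes ``each family separately is a star'' and then merges the centers: its shifting claim directly yields one common element for both families under the two size hypotheses, and in the alternative case one family already has size below $k\binom{n-2}{k-2}\le\frac12\binom{n-1}{k-1}$, which finishes the proof. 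To repair your write-up you would need to prove the min-type statement (if $\mathcal{A},\mathcal{B}$ are cross-intersecting and not both contained in one star, then $\min\{|\mathcal{A}|,|\mathcal{B}|\}$ is at most roughly $k\binom{n-2}{k-2}$), i.e.\ essentially reproduce the paper's induction.
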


As a warm-up, first we show that when $n$ is at least cubic in $k$, this statement is true. Consider a pair of disjoint crossing-intersecting families $\mathcal{A}$ and $\mathcal{B}$ of $k$-sets of $[n]$. If both $\mathcal{A}$ and $\mathcal{B}$ are intersecting, then $\mathcal{A} \cup \mathcal{B}$ is also intersecting, by the Erd\H os-Ko-Rado Theorem, for $n \ge 2k$, we have
$$|\mathcal{A}|+|\mathcal{B}| \le \binom{n-1}{k-1},$$
and thus we have the desired inequality 
$$\min \{|\mathcal{A}|, |\mathcal{B}|\} \le \frac{1}{2}\binom{n-1}{k-1}.$$
Now suppose at least one of $\mathcal{A}$ and $\mathcal{B}$ is not intersecting, without loss of generality we may assume that $\mathcal{A}$ is not intersecting, then there exists $A_1, A_2 \in \mathcal{A}$, such that $A_1 \cap A_2 = \emptyset$. Now the number of sets that intersect with both $A_1$ and $A_2$ provides an upper bound for $|\mathcal{B}|$, which is at most
\begin{align*}
k^2 \binom{n-2}{k-2} = \frac{k^2(k-1)}{(n-1)} \binom{n-1}{k-1} < \frac{1}{2} \binom{n-1}{k-1}
\end{align*}
when $n \ge 2k^3$.\\

Next we will improve the range to $n \ge 2k^2$. The main tool used in this proof is the technique of shifting, which allows us to limit our attention to sets with certain structure. In this section we will only state and prove some relevant results. For more background on the applications of shifting in extremal set theory, we refer the reader to the survey \cite{shifting_survey} by Frankl. 

\begin{proof}[Proof of Theorem \ref{thm_cross}]
Now assume $n \ge 2k^2$, suppose there exist disjoint cross-intersecting families $\mathcal{A}, \mathcal{B} \subset \binom{[n]}{k}$ such that $$\min\{|\mathcal{A}|, |\mathcal{B}|\}> \frac{1}{2} \binom{n-1}{k-1}.$$
We will prove the following statement: given positive integer $k, l$, and $n \ge k+l$, suppose $\mathcal{A} \subset \binom{[n]}{k}$, $\mathcal{B} \subset \binom{[n]}{l}$ are cross-intersecting. If 
$|\mathcal{A}| >\max\{k, l\} \binom{n-2}{k-2}$ and $|\mathcal{B}|> \max\{k,l\} \binom{n-2}{l-2}$,
then there exists some element $x$ contained in every subset of $\mathcal{A}$ and $\mathcal{B}$. Assuming this claim, if there exists $x$ such that $\mathcal{A}, \mathcal{B}$ are subfamilies of the $1$-star centered at $x$, then $|\mathcal{A} \cup \mathcal{B}| \le \binom{n-1}{k-1}$, and  Theorem \ref{thm_cross} follows from the disjointness of $\mathcal{A}$ and $\mathcal{B}$. Otherwise, either $|\mathcal{A}|$ or $|\mathcal{B}|$ has to be strictly smaller than 
$$k\binom{n-2}{k-2} = k \cdot\frac{k-1}{n-1} \binom{n-2}{k-2} \le \frac{1}{2}\binom{n-2}{k-2}$$ 
for $n \ge 2k^2$, which also proves Theorem \ref{thm_cross}. 

To show the claim, we use induction on $k, l, n$. Given a family $\mathcal{F}$, define  the $(i, j)$-shifting $S_{ij}$ as follows: let
$$S_{ij}(\mathcal{F})=\{S_{ij}(F): F \in \mathcal{F}\},$$
where 
$$S_{ij}(F)= \begin{cases}
F' & i \in F,~j \not\in F,~F'=F \setminus \{i\} \cup \{j\} \not\in \mathcal{F};\\
F  & \textup{otherwise}.
\end{cases}
.$$
It is well-known that if we apply $S_{ij}$ on $\mathcal{A}$ and $ \mathcal{B}$ simultaneously, the resulting families are still cross-intersecting. Therefore we can iteratively apply the shifting $S_{ij}$ for $j>i$ until we reach stable families ($S_{ij}(\mathcal{A})=\mathcal{A}$, $S_{ij}(\mathcal{B})=\mathcal{B}$). We claim that not only $A \in \mathcal{A}$ and $B \in \mathcal{B}$ must intersect, their intersection must also contain some element from $\{1, \cdots, k+l\}$. Suppose not, consider all the pairs $(A, B)$ such that $A \cap B \cap [k+l] =\emptyset$, pick the one which minimizes $|A \cap B|$. Since $A \cap B \neq \emptyset$, there exists $i \in \{k+l+1, \cdots, n\}$ such that $i \in A \cap B$, and also $j \in [k+l]$ such that $j \not\in A \cup B$. Since $S_{ij}(A)=A$ an $S_{ij}(B)=B$, we must have $A'=A \setminus \{i\} \cup \{j\} \in \mathcal{A}$. Note that $A' \cap B \cap [k+l]$ is still empty, and $|A' \cap B|<|A \cap B|$, contradicting the choice of $A, B$.

Let $\mathcal{K}=\{A \cap [k+l]: A \in \mathcal{A}\}$ and $\mathcal{L}=\{B \cap [k+l]: B \in \mathcal{B}\}$. We denote by $\mathcal{K}_i$ and $\mathcal{L}_i$ the family of $i$-subsets in $\mathcal{K}$ and $\mathcal{L}$ respectively. Every set $A \in \mathcal{A}$ must instersect $[k+l]$ with a subset from $\mathcal{K}$, therefore
$$|\mathcal{A}| \le \sum_{i=1}^k |\mathcal{K}_i| \binom{n-k-l}{k-i},$$
Similarly, $$|\mathcal{B}| \le \sum_{i=1}^l |\mathcal{L}_i| \binom{n-k-l}{l-i}.$$
Recall that $|\mathcal{A}| > \max\{k, l\} \binom{n-2}{k-2}$, since 
$$\sum_{i=1}^k \binom{n-k-l}{k-i} \binom{k+l-2}{i-2} = \binom{n-2}{k-2},$$
we know that there exists $i \in [k]$, such that $|\mathcal{K}_i| > \max\{k, l \} \binom{k+l-2}{i-2} \ge i \binom{k+l-2}{i-2}$. Similarly there exists $j$ such that $|\mathcal{L}_j| > \max\{k, l\}\binom{k+l-2}{j-2} \ge j \binom{k+l-2}{j-2}$. Note that $\mathcal{K}_i$ and $\mathcal{L}_j$ are cross-intersecting and $k+l \ge i+j$, so by induction, as long as $(n, k, l) \neq (k+l, i, j)$, there exists $x$ in every set of $\mathcal{K}_i$ and $\mathcal{L}_j$. Suppose there exists $B \in \mathcal{B}$, such that $x \not\in B$, then every set in $\mathcal{K}_i$ must intersect $B$ and contains $x$, there are less than $l \binom{n-2}{i-2} \le \max\{k, l\}\binom{n-2}{i-2}$ such subsets, contracting that $\mathcal{K}_i$ is large. Similarly we can show that $x$ is contained in every set of $\mathcal{A}$. When $(n, k, l)=(k+l, i, j)$, we know that $\mathcal{A} \subset \binom{[k+l]}{k}$ and $\mathcal{B} \subset \binom{[k+l]}{l}$. Since $\mathcal{A}$ and $\mathcal{B}$ are cross-intersecting, $\mathcal{A}$ and $\mathcal{B}^c=\{[k+l]\setminus B: B \in \mathcal{B}\}$ must be disjoint. Therefore $|\mathcal{A}|+|\mathcal{B}| \le  \binom{k+l}{k}$. However, suppose $k\ge l$, then by the assumption,
$$|\mathcal{A}|+|\mathcal{B}| \ge k \binom{k+l-2}{k-2}+ k\binom{k+l-2}{l-2}>\binom{k+l}{k}.$$
The last inequality is true for all $k \ge 2, l \ge 2$ except $(k,l)=(2, 2)$. In this case it is easy to check the statement is also true. This completes the proof of the previous claim, as well as Theorem \ref{thm_cross}.
\end{proof}

Naturally one would wonder whether then range $n \ge 2k^2$ could be further improved. We will show that a quadratic range is necessary here. We consider the following construction, let $t \ge 2$ be a fixed integer much smaller than $k$ or $n$. We choose $\mathcal{A}$ so that it consists of all the subsets whose intersection with $[t+1]$ is either $\{1\}$ or $\{2, \cdots, t+1\}$. $\mathcal{B}$ consists of all the subsets whose intersection with $[t+1]$ contains $1$ and some element of $\{2, \cdots, t+1\}$. Then 
$$|\mathcal{A}| = \binom{n-t-1}{k-1} + \binom{n-t-1}{k-t},$$
$$|\mathcal{B}| = \binom{n-1}{k-1}-\binom{n-t-1}{k-1}.$$

We will choose $k/n$ to be a fixed constant, and let $n, k$ tends to infinity. To estimate the sizes of $\mathcal{A}$ and $\mathcal{B}$, the following lemma is useful.
\begin{lemma}\label{lemma_approx}
	Suppose $k=\alpha n$ for some fixed $\alpha \in (0, 1)$. Then for fixed $t, i$, when $n \rightarrow \infty$, 
	$$\binom{n-t}{k-i}/\binom{n}{k} \rightarrow \alpha^i (1-\alpha)^{t-i} .$$
\end{lemma}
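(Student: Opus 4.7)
The plan is to expand both binomial coefficients as ratios of factorials, cancel, and then extract the dominant power of $n$ from each remaining factor. Concretely, I would first rewrite
$$\frac{\binom{n-t}{k-i}}{\binom{n}{k}} = \frac{(n-t)!\,k!\,(n-k)!}{(k-i)!\,(n-t-k+i)!\,n!}.$$
Matching the $(n-t-k+i)!$ in the denominator against $(n-k)!$ and the $(k-i)!$ against $k!$ in the numerator, this simplifies to a quotient of falling factorials, namely
$$\frac{\binom{n-t}{k-i}}{\binom{n}{k}} = \frac{k(k-1)\cdots(k-i+1)\cdot (n-k)(n-k-1)\cdots(n-k-(t-i)+1)}{n(n-1)\cdots(n-t+1)},$$
with $i$ linear factors in the first block of the numerator, $t-i$ linear factors in the second block, and $t$ linear factors in the denominator.

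Next I would substitute $k=\alpha n$ and pull one factor of $n$ out of every linear term. Since each numerator factor in the first block equals $\alpha n-O(1)$, each numerator factor in the second block equals $(1-\alpha)n-O(1)$, and each denominator factor equals $n-O(1)$, the powers of $n$ cancel exactly ($n^{i+(t-i)-t}=1$), leaving
$$\frac{\binom{n-t}{k-i}}{\binom{n}{k}} = \prod_{j=0}^{i-1}\!\left(\alpha-\tfrac{j}{n}\right)\cdot \prod_{j=0}^{t-i-1}\!\left((1-\alpha)-\tfrac{j+O(1)}{n}\right)\cdot \prod_{j=0}^{t-1}\!\left(1-\tfrac{j}{n}\right)^{-1}.$$
Because $t$ and $i$ are fixed while $n\to\infty$, each of these finitely many factors converges to $\alpha$, $1-\alpha$, or $1$, respectively, and so the product converges to $\alpha^i(1-\alpha)^{t-i}$.

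There is no real obstacle, only bookkeeping: one should verify that the factorial cancellation is legal, i.e.\ that $n-t-k+i\ge 0$, which is automatic for all sufficiently large $n$ since $n-k=(1-\alpha)n\to\infty$, and that the $O(1)$ shifts in the second block (which involve $k$ as well as small constants) are indeed bounded as $n\to\infty$ after dividing by $n$. Both are immediate from $\alpha\in(0,1)$ being fixed.
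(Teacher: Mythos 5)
Your proposal is correct and follows essentially the same route as the paper: both reduce the ratio to the quotient of falling factorials $\frac{k\cdots(k-i+1)\,(n-k)\cdots(n-k-(t-i)+1)}{n\cdots(n-t+1)}$ and then observe that, with $k=\alpha n$ and $t,i$ fixed, each of the finitely many factors contributes $\alpha$, $1-\alpha$, or $1$ in the limit. The only cosmetic difference is that the paper pairs numerator and denominator factors into ratios while you extract a power of $n$ from each linear term; the conclusion is identical.
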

\begin{proof}
	\begin{align*}
	\dfrac{\binom{n-t}{k-i}}{\binom{n}{k}} &=\frac{k\cdots (k-i+1)\cdot (n-k)\cdots (n-k-(t-i)+1)}{n \cdots (n-t+1)}\\
	&=\frac{k}{n}\cdots \frac{k-i+1}{n-i+1} \cdot \frac{n-k}{n-i} \cdots \frac{n-k-(t-i)+1}{n-t+1} \rightarrow \alpha^{i} (1-\alpha)^{t-i}.
	\end{align*}
\end{proof}


Now we returning to our construction. First observe that $|\mathcal{A}| + |\mathcal{B}| > \binom{n-1}{k-1}$. Moreover when $k/n \sim \alpha$ and $n, k \rightarrow \infty$, by Lemma \ref{lemma_approx},
$$\frac{|\mathcal{A}|}{\binom{n-1}{k-1}} \rightarrow (1-\alpha)^t + \alpha^{t-1}(1-\alpha).$$

$$\frac{|\mathcal{B}|}{\binom{n-1}{k-1}} \rightarrow 1-(1-\alpha)^t.$$

Note that for fixed $t$, there exists some small positive constant $c$, such that for $\alpha$ in a small interval $[1-(1/2)^{1/t}, 1-(1/2)^{1/t}+c]$, both $1-(1-\alpha)^t>1/2$ and $(1-\alpha)^t+\alpha^{t-1}(1-\alpha)>1/2$. This shows that for $\alpha$ in this interval, if we let $k=\alpha n$ and $n, k$ are sufficiently large, there exists disjoint cross-intersecting $\mathcal{A}, \mathcal{B}$, both of size strictly greater than $\frac{1}{2}\binom{n-1}{k-1}$. Note that $1-(1/2)^{1/t}$ tends to $0$ when $t$ goes to infinity. Therefore it is not possible to prove Theorem \ref{thm_cross} for $n>Ck$ for any fixed constant $C$. 

As pointed out to us by Frankl and Kupavskii \cite{private}, the same construction actually shows the quadratic range in Theorem \ref{thm_cross} is best possible, up to a constant factor. This can be seen by setting $t=k-1$ in the construction. Then $|\mathcal{A}|=\binom{n-k}{k-1}+(n-k)$ and $|\mathcal{B}|=\binom{n-1}{k-1}-\binom{n-k}{k-1}$. As long as $|\mathcal{B}|>|\mathcal{A}|$ (which is true until $n>Ck^2$ for some constant $C$), one can move subsets from $\mathcal{B}$ to $\mathcal{A}$ and still have a cross-intersecting family, since $\mathcal{B}$ itself is intersecting. Recall that the sum of their sizes is strictly greater than $\binom{n-1}{k-1}$, therefore Theorem \ref{thm_cross} is only correct for $n$ at least qudratic in $k$. Frankl and Kupavskii have also obtained results similar to Theorem \ref{thm_cross} using different methods.

The proof of Theorem \ref{thm_cross} relies on the assumption that $n$ is quadratic in $k$, when we compare $k \binom{n-2}{k-2}$ with $\frac{1}{2}\binom{n-1}{k-1}$. One may wonder whether it is possible to show that for smaller $n$, either $|\mathcal{A}|$ or $|\mathcal{B}|$ cannot exceed $c \binom{n-1}{k-1}$, for some constant $c$ strictly smaller than $1$. The following result confirms this speculations, and actually implies that for $n=Ck$, $\min\{|\mathcal{A}|, |\mathcal{B}|\} \le (\frac{1}{2}+\delta_C) \binom{n-1}{k-1},$  where $\delta_C$ goes to zero as $C$ tends to infinity. 

\begin{theorem}\label{thm_weaker}
	For $n \ge 2k$, given two cross-intersecting families $\mathcal{A}, \mathcal{B} \subset \binom{[n]}{k}$ that are disjoint, we have
	$$\min\{|\mathcal{A}|,|\mathcal{B}|\} \le \frac{1}{2}\binom{n-1}{k-1} \cdot \frac{n-2}{n-k-1}.$$
\end{theorem}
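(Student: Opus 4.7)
The plan is to combine the Matsumoto--Tokushige product bound with a degree-counting argument that exploits the disjointness hypothesis. I would proceed in three steps.

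First, I would double-count the ordered triples $(A, B, x)$ with $A \in \mathcal{A}$, $B \in \mathcal{B}$ and $x \in A \cap B$. Writing $d_{\mathcal{F}}(x) = |\{F \in \mathcal{F} : x \in F\}|$, the cross-intersection condition gives $|A \cap B| \ge 1$ for every pair, so
$$|\mathcal{A}|\,|\mathcal{B}| \;\le\; \sum_{A, B} |A \cap B| \;=\; \sum_{x=1}^n d_{\mathcal{A}}(x)\, d_{\mathcal{B}}(x).$$

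Second, I would extract a linear inequality from disjointness. For every $x \in [n]$, the families $\{A \in \mathcal{A}: x \in A\}$ and $\{B \in \mathcal{B}: x \in B\}$ are disjoint subfamilies of the $1$-star at $x$, so $d_{\mathcal{A}}(x) + d_{\mathcal{B}}(x) \le \binom{n-1}{k-1}$. Using $d_{\mathcal{A}}(x) d_{\mathcal{B}}(x) \le d_{\mathcal{A}}(x)\bigl(\binom{n-1}{k-1} - d_{\mathcal{A}}(x)\bigr)$ together with the identity $\sum_x d_{\mathcal{A}}(x) = k|\mathcal{A}|$ and a Cauchy--Schwarz lower bound on $\sum_x d_{\mathcal{A}}(x)^2$, this should yield a relation of the shape
$$|\mathcal{A}|\,|\mathcal{B}| \;\le\; k|\mathcal{A}|\binom{n-1}{k-1} - \frac{k^2 |\mathcal{A}|^2}{n}.$$

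Third, I would combine this with the Matsumoto--Tokushige bound $|\mathcal{A}|\,|\mathcal{B}| \le \binom{n-1}{k-1}^2$ (valid for cross-intersecting $\mathcal{A}, \mathcal{B} \subset \binom{[n]}{k}$ when $n \ge 2k$), and, assuming without loss of generality $|\mathcal{A}| \le |\mathcal{B}|$, optimize to conclude that $|\mathcal{A}|$ is at most the claimed value. The idea is that in the regime $|\mathcal{A}| > \tfrac{1}{2}\binom{n-1}{k-1}\tfrac{n-2}{n-k-1}$, Matsumoto--Tokushige forces $|\mathcal{B}| \le 2\binom{n-1}{k-1}\tfrac{n-k-1}{n-2}$, and this is to be shown incompatible with the degree inequality above.

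The main obstacle is calibrating the constants to produce exactly the factor $\tfrac{n-2}{n-k-1}$. The naive averaging only gives a bound of the form $|\mathcal{A}|(1 + k^2/n) \le k\binom{n-1}{k-1}$, which matches the target only at $n = 2k$ and is too weak already at $n = 2k+1$. Closing this gap will likely require either a sharper substitute for the $\sum_x d_\mathcal{A}(x)^2 \ge k^2|\mathcal{A}|^2/n$ step, or a Hoffman--type spectral argument on the Kneser graph $K(n,k)$ applied to $\chi_{\mathcal{A}} + \chi_{\mathcal{B}}$, using the crucial identity $\chi_{\mathcal{A}}^{\top} K\, \chi_{\mathcal{B}} = 0$ coming from cross-intersection.
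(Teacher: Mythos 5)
There is a genuine gap: your argument, as you yourself note at the end, does not reach the claimed bound, and the fix you gesture at is precisely the part that would need to be done. The elementary double count is provably too lossy. The step $|\mathcal{A}|\,|\mathcal{B}| \le \sum_{A,B}|A\cap B|$ gives away a factor of order $k^2/n$ (typical intersections of two $k$-sets in $[n]$ have size about $k^2/n$, not $1$), and working through your Step 2 with $|\mathcal{A}|\le|\mathcal{B}|$ one only gets $|\mathcal{A}| \le \frac{kn}{n+k^2}\binom{n-1}{k-1}$; at $n=2k$ this is about $2\binom{n-1}{k-1}$ (so it does not in fact match the target even there, for $k\ge 3$), and for $n\gg k^2$ it is weaker than the target by a factor of order $k$. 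The Matsumoto--Tokushige product bound does not help either: it only yields $\min\{|\mathcal{A}|,|\mathcal{B}|\}\le\binom{n-1}{k-1}$, which coincides with the claimed bound exactly at $n=2k$ and is off by a factor approaching $2$ as $n$ grows, and no interplay between it and your degree inequality recovers the factor $\frac{n-2}{n-k-1}$.

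The spectral route you mention in passing is indeed how the theorem is actually proved, but your sketch of it is not yet an argument. Applying a Hoffman-type ratio bound to $\chi_{\mathcal{A}}+\chi_{\mathcal{B}}$ does not work directly, because $\mathcal{A}\cup\mathcal{B}$ need not be an independent set in the Kneser graph ($\mathcal{A}$ alone need not be intersecting); the quadratic form $(\chi_{\mathcal{A}}+\chi_{\mathcal{B}})^{\top}M(\chi_{\mathcal{A}}+\chi_{\mathcal{B}})$ is not zero, only the cross term is. The proof in the paper uses both bilinear identities, $\langle\chi_{\mathcal{A}},\chi_{\mathcal{B}}\rangle=0$ (disjointness) and $\chi_{\mathcal{A}}^{\top}M\chi_{\mathcal{B}}=0$ (cross-intersection), expands both characteristic vectors in the eigenbasis of $M$, adds $K$ times the first identity to the second with the specific shift $K=\frac{1}{2}\left(\binom{n-k-1}{k-1}-\binom{n-k-2}{k-2}\right)$ so that every non-principal coefficient $\lambda_i+K$ has absolute value at most $L=\frac{1}{2}\left(\binom{n-k-1}{k-1}+\binom{n-k-2}{k-2}\right)$, and then applies Cauchy--Schwarz to the tail sums together with $\sqrt{\binom{n}{k}}\,\alpha_1=\sum_i\alpha_i^2$ to force one of the two families below $\binom{n}{k}\cdot\frac{L}{K+L+\lambda_1}=\frac{1}{2}\binom{n-1}{k-1}\cdot\frac{n-2}{n-k-1}$. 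None of this calibration is present in your proposal, so the conclusion is not established.
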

\begin{proof}
The key tool that will be used this proof is the spectrum of Kneser graphs. The Kneser graph $KG(n, k)$ has vertices corresponding to all the $k$-subsets of $[n]$, and two vertices are adjacent if and only if the two corresponding sets are disjoint. It is known that (see for example on Page $200$ of \cite{godsil-royle})
 its adjacency matrix $M$ has eigenvalues $(-1)^{i+1}\binom{n-k-i}{k-i}$ of multiplicity $\binom{n}{i}-\binom{n}{i-1}$, where $i=0, \cdots, k$, and $\binom{n}{-1}$ is defined as $0$. Moreover, the all-one vector is an eigenvector of the largest eigenvalue $\binom{n-k}{k}$. We denote by $\lambda_1, \cdots, \lambda_{n \choose k}$ the eigenvalues in the aforementioned order, i.e. non-increasing in absolute value, and assume that their corresponding eigenvectors are ${\bf v}_1, \cdots, {\bf v}_{\binom{n}{k}}$, which form an orthonormal basis.

We consider the characteristic vectors ${\bf 1}_{\mathcal{A}}$ and ${\bf 1}_{\mathcal{B}}$ of the two disjoint cross-intersecting families. Both of them are in the space $\{0, 1\}^{{n \choose k}}$. We can express them as linear combinations of the eigenvectors:
$${\bf 1}_{\mathcal{A}} = \alpha_1{\bf v}_1 + \cdots + \alpha_{\binom{n}{k}} {\bf v}_{n \choose k},$$
$${\bf 1}_{\mathcal{B}} = \beta_1{\bf v}_1 + \cdots + \beta_{\binom{n}{k}} {\bf v}_{n \choose k}.$$
Since $\mathcal{A}$ and $\mathcal{B}$ are disjoint, the inner product of ${\bf 1}_{\mathcal{A}}$ and ${\bf 1}_{\mathcal{B}}$ equals $0$. This gives 
\begin{align} \label{eqn_1}
\alpha_1\beta_1 +\cdots+ \alpha_{\binom{n}{k}}\beta_{\binom{n}{k}}=0. 
\end{align}
Moreover, from the cross-intersecting property, we have 
\begin{align} \label{eqn_2}
0=\langle {\bf 1_{\mathcal{A}}}, M\cdot {\bf 1_{\mathcal{B}}} \rangle = \lambda_1\alpha_1\beta_1 +\cdots+ \lambda_{n\choose k}\alpha_{\binom{n}{k}}\beta_{\binom{n}{k}}
\end{align}
Let 
$$K=\frac{1}{2}\left(\binom{n-k-1}{k-1}-\binom{n-k-2}{k-2}\right).$$

We multiply $K$ to the \eqref{eqn_1} and add the resulting identity to the \eqref{eqn_2}. It is not hard to observe that the coefficient of $\alpha_1\beta_1$ is equal to $K+\lambda_1$, and the rest of the coefficients have absolute value at most 
$$\frac{1}{2}\left(\binom{n-k-1}{k-1}+\binom{n-k-2}{k-2}\right):=L
.$$
Therefore, by the triangle inequality,
\begin{align} \label{ineq_3}
|(K+\lambda_1) \alpha_1\beta_1| &= |\sum_{i=2}^{n \choose k} (\lambda_i+K) \alpha_i\beta_i|\le L \cdot \sum_{i=2}^{n \choose k} |\alpha_i||\beta_i| \nonumber\\
& \le L \cdot (\sum_{i=2}^{n \choose k} \alpha_i^2)^{1/2} \cdot (\sum_{i=2}^{n \choose k} \beta_i^2)^{1/2}
\end{align}
Recall that ${\bf 1}/\sqrt{\binom{n}{k}}$ is a unit eigenvector for the eigenvalue $\lambda_1$. Note that $|\mathcal{A}|=\|{\bf 1}_{\mathcal{A}}\|^2 = \sum_{i=1}^{n \choose k}\alpha_i^2$. On the other hand $|\mathcal{A}|=\langle {\bf 1}, {\bf 1}_{\mathcal{A}} \rangle$, therefore
$$\sqrt{\binom{n}{k}}\alpha_1 = \alpha_1^2+ \cdots + \alpha_{n\choose k}^2$$
We have a similar inequality for $\{\beta_i\}$. Plugging both of them into the previous inequality \eqref{ineq_3}, we have
\begin{align*}
\frac{K+\lambda_1}{L} \alpha_1^2 \cdot \frac{K+\lambda_1}{L} \beta_1^2 
\le (\sqrt{n \choose k}\alpha_1 -\alpha_1^2)(\sqrt{n \choose k}\beta_1 -\beta_1^2)
\end{align*}
Therefore either 
$$\frac{K+\lambda_1}{L} \alpha_1^2 \le \sqrt{n \choose k}\alpha_1 -\alpha_1^2,$$
or a similar inequality holds for $\beta$. Solving this inequality, we get
\begin{align*}
|\mathcal{A}|&=\sqrt{n \choose k} \alpha_1 \le \binom{n}{k} \cdot \frac{L}{K+L+\lambda_1}\\
&=\binom{n}{k} \cdot \frac{1}{2} \cdot \frac{\binom{n-k-1}{k-1}+\binom{n-k-2}{k-2}}{\binom{n-k}{k}+\binom{n-k-1}{k-1}}\\
&= \frac{1}{2}\binom{n-1}{k-1} \cdot \frac{n-2}{n-k-1}.
\end{align*}
\end{proof}

We do not know whether it is possible to further improve this upper bound, say for $2k+1 \le n \le 2k^2$. Observe that for example when $n=2k$, Theorem \ref{thm_weaker} gives $\min\{|\mathcal{A}|, |\mathcal{B}|\} \le \binom{2k-1}{k-1}$. This bound is best possible. This can be seen by pairing each $k$-set with its complement, and partitioning the $\binom{2k-1}{k-1}$ pairs into $\mathcal{A}$ and $\mathcal{B}$ as evenly as possible. However, even for $n=2k+1$ this bound does not seem to be sharp. It would be great if for every value of $n$, the maximum of $\min\{|\mathcal{A}|, |\mathcal{B}|\}$ can be determined precisely.

\section{Diversity of intersecting families}
Given an intersecting family $\mathcal{F}$ of $k$-subsets of $[n]$. Its {\it diversity}, denoted by $\dive(\mathcal{F})$, is defined as the number of sets not passing through the most popular element. For example, the $1$-star extremal construction in the Erd\H os-Ko-Rado Theorem has diversity $0$, since every set contains the center of the $1$-star. The Hilton-Milner Theorem \cite{hilton-milner} is equivalent to finding the maximum size of an intersecting family with diversity at least $1$. It is natural to ask the following question: given a family $\mathcal{F}$ of $k$-subset of $[n]$, what is the maximum diversity it can possibly have? Let $\mathcal{F}_x=\{F: x \in F \in \mathcal{F}\}$, the goal is to maximize $\dive(\mathcal{F})=|\mathcal{F}|- \max_{x \in [n]} |\mathcal{F}_x|$.

This question was first suggested by Katona and addressed by Lemons and Palmer \cite{lemons-palmer}. They showed that for $n>6k^3$, the diversity of $\mathcal{F}$ is at most $\binom{n-3}{k-2}$, with the equality attained by the following family:
$$\mathcal{F}=\left\{F: F \in \binom{[n]}{k}, |F \cap [3]| \ge 2 \right\}.$$
Recently, Frankl \cite{frankl_diversity} proved that $\dive(\mathcal{F}) \le \binom{n-3}{k-2}$ for $n \ge 6k^2$, and conjectured that the same holds for $n>3k$. More recently, Kupavskii \cite{kupavskii_diversity} verified Frankl's conjecture for $n>Ck$, for some large constant $C$. He also consider the intersecting families 
$$\mathcal{D}_r=\{D: D \in \binom{[n]}{k}, |D \cap [2r+1]| \ge r+1\}.$$ 
Here the ``two out of three'' family $\mathcal{F}$ is just $\mathcal{D}_1$. By computing the diversities of $\mathcal{D}_r$ for $r=1, \cdots, k-1$, it is not hard to show that $\mathcal{D}_r$ has the largest diversity among all $\mathcal{D}_1, \cdots \mathcal{D}_{k-1}$, for $(k-1)(2+\frac{1}{r})+1 \le n \le (k-1)(2+\frac{1}{r-1})+1$. This observation prompts the following stronger conjecture in \cite{kupavskii_diversity}:
\begin{conjecture}
Fix $n\ge 2k$, and consider an intersecting family $\mathcal{F} \subset \binom{[n]}{k}$. If for some $r \in \mathbb{Z}_{\ge 0}$, we have $(k-1)(2+\frac{1}{r})+1 \le n \le (k-1)(2+\frac{1}{r-1})+1$, then $\dive(\mathcal{F}) \le \dive(\mathcal{D}_r)$.
\end{conjecture}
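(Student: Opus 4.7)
Given the abstract's explicit indication that a construction beats the conjectured bound for $n$ slightly larger than $3k$, my plan is to approach this statement from the disproof side: search for an explicit intersecting family $\mathcal{F} \subset \binom{[n]}{k}$ whose diversity strictly exceeds $\dive(\mathcal{D}_r)$ somewhere in the claimed range. I focus first on the $r=1$ regime, $n$ slightly above $3k-2$, since there $\dive(\mathcal{D}_1) = \binom{n-3}{k-2}$ is clean and any counterexample simultaneously disproves Frankl's conjecture.

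The first step is to recompute $\dive(\mathcal{D}_r)$ for each $r$ by inclusion--exclusion over the symmetric core $[2r+1]$, and to verify that the crossover $\dive(\mathcal{D}_{r+1}) = \dive(\mathcal{D}_r)$ lines up with $n = (k-1)(2 + 1/r) + 1$; this pins down exactly which $\mathcal{D}_r$ one must beat in each regime. I would then sweep through candidate families: ``thickenings'' of $\mathcal{D}_r$, in which we add selected sets $F$ with $|F \cap [2r+1]| = r$ under a secondary constraint on $F \setminus [2r+1]$ that forces pairwise intersection; ``substitutions'' that trade sets through the most popular coordinate for sets avoiding it; and hybrids built from two partially overlapping cores $[2r+1]$ and $[2r'+1]$. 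For each candidate, the intersecting property is typically certified by a pigeonhole argument on a well-chosen witness subset, and the diversity follows from a routine closed-form count.

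The main obstacle is the tension between maintaining the intersecting property and keeping the maximum degree small. Since $n > 2k$ forbids adding any set that avoids the core $[2r+1]$ entirely while still hitting all of $\mathcal{D}_r$, every new set must touch the core and therefore risks inflating the count of the most popular coordinate, wiping out the diversity gain. The winning construction must carefully redistribute memberships across several coordinates; this explains why the extra diversity should only surface in a narrow window of $n$, matching the abstract's ``slightly larger than $3k$'' prediction. If none of the parametric families I try beats $\mathcal{D}_1$, I would pivot to a shifting-based proof attempt, assuming $\mathcal{F}$ is shifted, bounding $|\mathcal{F}_1|$ in terms of residual intersecting families on $[2,n]$, and iterating; but given the abstract's hint, I expect the construction search to succeed before the shifting route is needed.
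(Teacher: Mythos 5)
You are right to read the statement as something to be refuted rather than proved, and your high-level plan (exhibit an intersecting family beating $\dive(\mathcal{D}_1)=\binom{n-3}{k-2}$ for $n$ slightly above $3k$, which kills both Frankl's conjecture and this one) matches the paper's intent. But as written the proposal is a search strategy, not a proof: no explicit family is produced, no diversity is computed, and no comparison with $\binom{n-3}{k-2}$ is carried out. Worse, the parametric families you plan to sweep (thickenings of $\mathcal{D}_r$ by sets with $|F\cap[2r+1]|=r$ under a secondary constraint, substitutions through the popular coordinate, hybrids of two odd cores $[2r+1]$ and $[2r'+1]$) do not contain the construction that actually works, so there is no reason to believe the sweep terminates successfully. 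The paper's counterexample is of a different shape: it fixes an \emph{even} core $[t]$ with $t=6$, takes a maximum intersecting family $\mathcal{H}\subset 2^{[6]}$ of large diversity — the upward closure of a $3$-uniform intersecting family $\mathcal{G}$ of $10$ triples chosen one from each complementary pair so that every element lies in exactly $5$ of them (regularity of the core is the essential feature, since it keeps every coordinate's degree low simultaneously) — and lifts it to $\mathcal{F}=\{F\in\binom{[n]}{k}: F\cap[6]\in\mathcal{H}\}$. One then computes, with $k=\alpha n$ and $n\to\infty$, the limits $|\mathcal{F}\setminus\mathcal{F}_x|/\binom{n}{k}\to 5\alpha^3-10\alpha^4+6\alpha^5-\alpha^6$ for $x\in[6]$ and $\to 10\alpha^3-25\alpha^4+21\alpha^5-6\alpha^6$ for $x\notin[6]$, and checks both exceed $\alpha^2(1-\alpha)$ precisely when $\alpha\in(2-\sqrt3,\,1/3)$, i.e. $3k<n<(2+\sqrt3)k$. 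None of this appears in your proposal, and the asymptotic ratio lemma needed to make the comparison rigorous is also missing.

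A second concern: your fallback plan is to ``pivot to a shifting-based proof attempt'' of the statement if the construction search fails. Since the statement is false in the stated range, that route cannot succeed, and including it signals that the proposal has not actually settled which way the truth lies — which is exactly what a complete argument must do. To repair the proposal, commit to the disproof, specify the core family on $[6]$ (or some equally explicit regular intersecting family with maximum diversity on an even ground set), and carry out the two degree computations and the comparison with $\binom{n-3}{k-2}$ in the regime $k=\alpha n$.
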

Note that the $r=1$ case corresponds to Frankl's conjecture. Below we will present a construction showing that $3k$ is not the right threshold for Frankl's conjecture, thus disproving both conjectures.

Let $t$ be a positive integer, and $\mathcal{H}$ be an intersecting family of subsets of $[t]$, which is not necessarily uniform. Let
$$\mathcal{F}=\{F: F \in \binom{[n]}{k}, F \cap [t] \in \mathcal{H}\},$$ 
then $\mathcal{F}$ is also intersecting. Denote by $N_i$ the number of $i$-sets in $\mathcal{H}$, and $N_i(x)$ the number of $i$-sets in $\mathcal{H}$ not containing the element $x$. It is not hard to see that for $x \in [t]$,
$$|\mathcal{F} \setminus \mathcal{F}_x|=\sum_{i=1}^t N_i(x) \binom{n-t}{k-i},$$
and for $x \in [n] \setminus [t]$, 
$$|\mathcal{F}\setminus \mathcal{F}_x|=\sum_{i=1}^t N_i \binom{n-t-1}{k-i}.$$

\begin{theorem}
For $k$ sufficiently large and  $3k < n < (2+\sqrt{3})k$, there exists a family $\mathcal{F}$ such that 
$$\dive(\mathcal{F}) > \binom{n-3}{k-2}.$$ 
\end{theorem}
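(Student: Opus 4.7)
The plan is to use the general construction framework introduced just before the theorem: specify an intersecting family $\mathcal{H}$ on a fixed ground set $[t]$, let $\mathcal{F} = \{F \in \binom{[n]}{k} : F \cap [t] \in \mathcal{H}\}$, and read $\dive(\mathcal{F})$ directly from the displayed formulas. I would take $t = 6$ and engineer $\mathcal{H}$ so that every element of $[6]$ is equally popular. Concretely, fix a $5$-cycle $C$ on $\{2,3,4,5,6\}$, and let $\mathcal{H}$ consist of every subset of $[6]$ of size at least $3$, \emph{except} two families of five ``bad'' triples: the triples $T \subset \{2,\ldots,6\}$ whose complement $\{2,\ldots,6\} \setminus T$ is \emph{not} an edge of $C$, and the triples of the form $\{1\} \cup E$ with $E \in C$.

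My first step is to verify that $\mathcal{H}$ is intersecting. Most pairs of members of $\mathcal{H}$ meet trivially by pigeonhole on $[6]$; the only potentially dangerous pair consists of a triple $\{1\} \cup E'$ with $E' \notin C$ together with a complement $\{2,\ldots,6\} \setminus E$ of a cycle edge $E \in C$, and for these the intersection is $E' \setminus E$, which is nonempty because $E \neq E'$ are distinct $2$-subsets of a $5$-element set. This is the delicate point that pins down the specific ``cycle-complement'' design.

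With $\mathcal{H}$ in hand, I would tabulate the values $N_i(x)$ for each $x \in [6]$, using that the five surviving triples outside $\{1\}$ form a $3$-regular hypergraph on $\{2,\ldots,6\}$ (each vertex lies in exactly three). This $3$-regularity forces $N_3(x) = 5$ for every $x \in [6]$, and the higher $N_i(x)$ are constant in $x$ by inspection, so
\[
|\mathcal{F} \setminus \mathcal{F}_x| = 5\binom{n-6}{k-3} + 5\binom{n-6}{k-4} + \binom{n-6}{k-5}
\]
uniformly over $x \in [6]$. A straightforward Pascal expansion then verifies that $|\mathcal{F} \setminus \mathcal{F}_x|$ is strictly larger for $x \notin [6]$, so the most popular element lies in $[6]$ and $\dive(\mathcal{F})$ equals the displayed expression.

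Finally, using $\binom{n-3}{k-2} = \binom{n-6}{k-2} + 3\binom{n-6}{k-3} + 3\binom{n-6}{k-4} + \binom{n-6}{k-5}$, the desired strict inequality collapses to $2\binom{n-6}{k-3} + 2\binom{n-6}{k-4} > \binom{n-6}{k-2}$. Applying Lemma~\ref{lemma_approx} with $\alpha = k/n$ reduces this to the asymptotic polynomial inequality $2\beta^2 + 2\beta > 1$ in $\beta = k/(n-k)$, whose positive root $\beta = (\sqrt{3}-1)/2$ corresponds exactly to $n = (2+\sqrt{3})k$. For $k$ sufficiently large and $n \in (3k,\,(2+\sqrt{3})k)$ the inequality is strict, giving the conclusion. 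The main obstacle is the initial design step: generic choices of five triples of $\{2,\ldots,6\}$ leave some vertex in strictly more than three of them, making that vertex more popular than $1$ and destroying the balance; the cycle-complement construction is essential because it forces precisely the $3$-regularity that equalises popularity across all six elements.
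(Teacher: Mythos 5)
Your construction is, up to relabeling, exactly the paper's: your ten surviving triples (complements of cycle edges inside the $5$-set, together with the special point joined to the chords) are the paper's family $\mathcal{G}$ with the special element renamed, your $\mathcal{H}$ is its up-closure, and the counts $N_3(x)=5$, $N_4(x)=5$, $N_5(x)=1$ and the use of Lemma~\ref{lemma_approx} match the paper's argument, with only a cosmetic difference in the final bookkeeping (you compare $x\in[6]$ with $x\notin[6]$ exactly via Pascal and reduce to the single inequality $2\binom{n-6}{k-3}+2\binom{n-6}{k-4}>\binom{n-6}{k-2}$, while the paper checks both types of elements against $\binom{n-3}{k-2}$ asymptotically). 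The concluding asymptotic step carries the same looseness as the paper's: Lemma~\ref{lemma_approx} yields the strict inequality for each fixed ratio $k/n$ in the open interval as $k\to\infty$, not uniformly for every integer $n$ up to the endpoint $(2+\sqrt{3})k$, so this is not a defect relative to the paper's own proof.
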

\begin{proof}
Let $t=6$, and 
$$\mathcal{G}=\{\{1,2,3\}, \{2,3,4\}, \{3,4,5\}, \{4,5,1\}, \{5,1,2\}, \\
\{1,3,6\}, \{2,4,6\}, \{2,5,6\}, \{3,5,6\}, \{1,4,6\}\}.$$
This family of $10$ sets of size $3$ is intersecting, every pair of elements appears exactly twice, and every element exactly $5$ times. Define
$$\mathcal{H}=\{F: F \subset [t], \textup{~there~exists~}G \in \mathcal{G} \textup{~such~that~} G \subset F \}.$$
Now we can compute $N_i$ and $N_i(x)$ for $\mathcal{H}$. We have $N_3=10$, $N_4=15$, $N_5=6$, $N_6=1$. And for each $x \in [6]$, $N_3(x)=5$, $N_4(x)=5$, $N_5(x)=1$, $N_6(x)=0$. Therefore, when we assume $k=\alpha n$ for fixed constant $\alpha$, as $n, k$ tends to infinity, by Lemma \ref{lemma_approx}, for $x \in [6]$,
\begin{align*}
\frac{|\mathcal{F} \setminus \mathcal{F}_x|}{{n \choose k}} &= \frac{\sum_{i=1}^6 N_i(x) \binom{n-t}{k-i}}{\binom{n}{k}} \rightarrow \sum_{i=1}^6 N_i(x)\cdot  \alpha^i (1-\alpha)^{t-i}.\\
&=5\alpha^3(1-\alpha)^3+5\alpha^4(1-\alpha)^2+\alpha^5(1-\alpha)\\
&=5\alpha^3-10\alpha^4 +6 \alpha^5 - \alpha^6 :=f_1(\alpha).
\end{align*}
For $x \not \in [6]$, 
\begin{align*}
\frac{|\mathcal{F} \setminus \mathcal{F}_x|}{{n \choose k}} &= \frac{\sum_{i=1}^6 N_i \binom{n-t-1}{k-i}}{\binom{n}{k}} \rightarrow \sum_{i=1}^6 N_i \alpha^i (1-\alpha)^{t+1-i}\\
&=10\alpha^3-25\alpha^4+21\alpha^5-6\alpha^6 :=f_2(\alpha).
\end{align*}
From Lemma \ref{lemma_approx}, we also have that $\binom{n-3}{k-2} \rightarrow \binom{n}{k} \cdot \alpha^2(1-\alpha)$. Solving $f_1(\alpha) >\alpha^2(1-\alpha)$, we have $\alpha \in (2-\sqrt{3}, 1)$. Solving $f_2(\alpha)>\alpha^2(1-\alpha)$, we have $\alpha  \in (\frac{9-\sqrt{57}}{12}, 1)$. Combining these two ranges, we know that when $\alpha \in (2-\sqrt{3}, 1)$, $k =\alpha n$ and $n$ goes to infinity, for every $x$, 
$$|\mathcal{F}\setminus \mathcal{F}_x| > \binom{n-2}{k-3},$$
thus the diversity of $\mathcal{F}$ is strictly greater than $\binom{n-2}{k-3}$.
\end{proof}

One can check that that the family $\mathcal{H}$ used in this construction is a maximum intersecting family of subsets of $[6]$. Moreover it also has the largest diversity. We believe that this property is the main reason that causes the resulting uniform family $\mathcal{F}$ to have large diversity. For $\mathcal{D}_r$, the family playing the role of $\mathcal{H}$ consists of subsets of $[2k+1]$ of size at least $k+1$. In order to completely settle the problem of determining the maximum diversity of a uniform intersecting family for every $n$, perhaps one should first prove the non-uniform version of the diversity problem: given an integer $n$, what is the family $\mathcal{F} \subset 2^{[n]}$ that has the maximum diversity? Motivated by the above discussions, the following conjecture seems natural. Let $n=2k+1$, and $\mathcal{Q}_k=\{A: A \subset [2k+1], |A| \ge k+1\}$.
\begin{conjecture}\label{conj_odd}
	For $n =2k+1$, suppose $\mathcal{F} \subset 2^{[n]}$ is intersecting. Then  $$\dive(\mathcal{F}) \le \dive(\mathcal{Q}_k)=\sum_{i=k+1}^{2k} \binom{2k}{i}.$$
\end{conjecture}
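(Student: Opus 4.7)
The natural plan is to fix a most popular element $x$ and split $\mathcal{F}$ into $\mathcal{G}=\{F\in\mathcal{F}:x\notin F\}$ (whose cardinality is the diversity) and the shadow $\mathcal{H}=\{F\setminus\{x\}:F\in\mathcal{F},\,x\in F\}$, both viewed as subfamilies of $2^{[2k]}$. Then $\mathcal{G}$ is intersecting, $(\mathcal{G},\mathcal{H})$ is cross-intersecting, and $|\mathcal{G}|+|\mathcal{H}|=|\mathcal{F}|\le 2^{2k}$ since a maximal intersecting family on $[2k+1]$ picks exactly one set from each complementary pair. Popularity of $x$ translates to the key system $|\mathcal{H}|-|\mathcal{H}_y|\ge |\mathcal{G}_y|$ for every $y\in[2k]$. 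The target bound $|\mathcal{G}|\le \frac{1}{2}(2^{2k}-\binom{2k}{k})$ is equivalent to $|\mathcal{H}|\ge \frac{1}{2}(2^{2k}+\binom{2k}{k})$.

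The approach I would attempt is to WLOG replace $\mathcal{H}$ by the full blocker $b(\mathcal{G}):=\{H\subset[2k]:H\cap G\ne\emptyset\text{ for all }G\in\mathcal{G}\}$, since enlarging $\mathcal{H}$ only raises $|\mathcal{F}_x|$ and thus cannot reduce diversity. Then partition $2^{[2k]}$ into $2^{2k-1}$ complementary pairs $\{C,\overline C\}$: the intersectingness of $\mathcal{G}$ allows at most one of $C,\overline C$ into $\mathcal{G}$, and whenever one of them does lie in $\mathcal{G}$, the cross-intersecting condition forces the other side and its version with $x$ appended out of $\mathcal{F}$. Combining this pair-by-pair accounting with the summed popularity inequality $\sum_{G\in\mathcal{G}}|G|\le 2k|\mathcal{H}|-\sum_{H\in\mathcal{H}}|H|$ would, if everything goes through, recover the $\binom{2k}{k}$ deficit between $\frac{1}{2}(2^{2k}-\binom{2k}{k})$ and the naive bound $2^{2k-1}$, by exploiting the asymmetry forced by $n=2k+1$ being odd (so every complementary pair has one side of size at most $k$ and the other of size at least $k+1$).

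I expect the main obstacle to be decisive: the conjecture as stated appears to be false already at $k=3$. Let $\mathcal{K}\subset\binom{[7]}{3}$ be the seven lines of the Fano plane and set $\mathcal{F}=(\mathcal{Q}_3\setminus\{L^c:L\in\mathcal{K}\})\cup\mathcal{K}$. Any two Fano lines meet, and any $4$-set in $\mathcal{Q}_3$ disjoint from a line $L$ must equal $L^c$, which has been removed, so $\mathcal{F}$ is a maximal intersecting family with $|\mathcal{F}|=64$. By Fano symmetry every point lies on exactly $3$ lines and is absent from $4$, so each degree equals $42-4+3=41$, giving $\dive(\mathcal{F})=64-41=23>22=\dive(\mathcal{Q}_3)$. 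Any workable proof strategy must therefore first correct the conjectured extremum, presumably by replacing $\mathcal{Q}_k$ with a family obtained by swapping in an intersecting design on the $k$-layer whenever such a design exists, and only then can the cross-intersecting and popularity framework sketched above be expected to yield a tight bound.
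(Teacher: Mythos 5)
The statement you were asked about is Conjecture \ref{conj_odd}, which the paper does not prove: it is put forward as an open problem, supported only by a computer check for $n\le 6$. So there is no proof in the paper to compare your framework against, and in any case you never carry that framework out: the splitting at the most popular element, the blocker replacement, and the complementary-pair accounting in your first two paragraphs remain a sketch. The substantive content of your submission is the claimed counterexample at $k=3$, and it is correct. With $\mathcal{K}$ the seven Fano lines in $[7]$ and $\mathcal{F}=(\mathcal{Q}_3\setminus\{L^c:L\in\mathcal{K}\})\cup\mathcal{K}$, the family is intersecting (two sets of size at least $4$ in $[7]$ meet; two lines meet; a line meets every set of size at least $5$ by a size count; and a $4$-set disjoint from a line $L$ must equal $L^c$, which was removed), it has $|\mathcal{F}|=57+7=64$, and it is regular: each point lies in $22$ sets of size at least $5$, in $20-4=16$ surviving $4$-sets, and on $3$ lines, for degree $22+16+3=41$ (equivalently, the degree sum is $294-28+21=287=7\cdot 41$). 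Hence $\dive(\mathcal{F})=64-41=23>22=\binom{6}{4}+\binom{6}{5}+\binom{6}{6}=\dive(\mathcal{Q}_3)$, so the conjectured bound fails at $n=7$. This does not contradict the paper's computational evidence, which stops at $n\le 6$.

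So the honest verdict is that you have not proved the statement --- as written it cannot be proved --- but you have refuted it, which is the more valuable outcome; you should present the construction as the main result rather than as an ``obstacle'' to a proof strategy. It is worth making explicit that your construction realizes, in the odd case, exactly the mechanism the paper itself invokes for the even case (Conjecture \ref{conj_even}): swapping a $(k+1)$-set of $\mathcal{Q}_k$ for its complementary $k$-set lowers the total degree by one, and if the swaps can be performed so that the family stays intersecting and as regular as possible, the maximum degree drops below that of $\mathcal{Q}_k$ while the size stays $2^{2k}$, so the diversity strictly increases; the Fano plane provides seven such swaps at $k=3$, reducing every degree from $42$ to $41$. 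A repaired conjecture should therefore replace $\mathcal{Q}_k$ by a maximal intersecting family on $[2k+1]$ whose maximum degree is as small as possible (obtained from $\mathcal{Q}_k$ by a regular or near-regular intersecting swap family in the middle layers when one exists), in parallel with the Brace--Daykin and Ihringer--Kupavskii discussion the paper cites for $n=2k$; only after fixing the target family would a cross-intersecting/popularity argument of the kind you sketch have a chance of being tight.
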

When $n=2k$, the situation could be slightly more complicated. Ideally the extremal family $\mathcal{F}$ should contain all the subsets of size at least $k+1$, together with half of the $k$-sets. Note that in this case, $\mathcal{F}$ is intersecting if and only if $\mathcal{F}_k$, its subfamily consisting of all the $k$-sets, is intersecting. So to maximize $\dive(\mathcal{F})$, we need to look for $\mathcal{F}_k \subset \binom{[2k]}{k}$ having the largest diversity. By the Erd\H os-Ko-Rado theorem, $|\mathcal{F}_k| \le \binom{2k-1}{k-1}$. And therefore 
$$\dive(\mathcal{F}_k) \le |\mathcal{F}_k|-|\mathcal{F}_k| \cdot \frac{k}{2k} \le \frac{1}{2} \binom{2k-1}{k-1}.$$

However this bound can only be attained when a regular $k$-uniform intersecting family of subset of $[2k]$ of size $\binom{2k-1}{k-1}$ exists. Brace and Daykin \cite{brace-daykin} showed that this happens if and only if $k$ is not a power of $2$. 
When $k$ is a power of $2$, Ihringer and Kupavskii \cite{ki} showed that the maximum size of such a regular family is $\binom{2k-1}{k-1}-3$. It is plausible that for even $n=2k$, $\dive(\mathcal{F})$ is always maximized by $\mathcal{F}_k \cup \binom{[n]}{\ge k+1}$, where $\mathcal{F}_k$ is a $k$-uniform family of size $\binom{2k-1}{k-1}$ as regular as possible. This prompts the following conjecture for the even case.
\begin{conjecture}\label{conj_even}
For $n=2k$, suppose $\mathcal{F} \subset 2^{[n]}$ is an intersecting family. If $k$ is not a power of $2$, then 
$$\dive(\mathcal{F}) \le \frac{1}{2}\binom{2k-1}{k-1}+\binom{2k-1}{k+1} + \cdots + \binom{2k-1}{2k-1};$$
and if $k$ is a power of $2$, then
$$\dive(\mathcal{F}) \le \frac{1}{2}\left(\binom{2k-1}{k-1}-1\right)+\binom{2k-1}{k+1} + \cdots + \binom{2k-1}{2k-1}.$$
\end{conjecture}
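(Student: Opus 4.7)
The plan is to reduce the conjecture to the Erd\H{o}s-Ko-Rado bound on the $k$-uniform slice via a structural reduction, and then to handle the parity issue via an integrality observation. I would carry this out in three steps.

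\emph{Step 1 (Canonical form).} I would first show that among all intersecting $\mathcal{F} \subset 2^{[2k]}$ achieving the maximum diversity, one may take
$$\mathcal{F} = \binom{[2k]}{\ge k+1} \cup \mathcal{F}_k,$$
where $\mathcal{F}_k \subset \binom{[2k]}{k}$ is an intersecting subfamily. The reduction splits into (a) eliminating all sets of size $<k$, and (b) adjoining the full upper layer $\binom{[2k]}{\ge k+1}$. For (a), given $A \in \mathcal{F}$ with $|A|<k$, I would replace $A$ by a superset $A \cup \{y\}$ where $y \notin A$ is chosen not to be a current maximum-degree element and so that $A\cup\{y\} \notin \mathcal{F}$; since any superset of $A$ automatically intersects everything $A$ does, the intersecting property is preserved, $|\mathcal{F}|$ is unchanged, and only the single degree $|\mathcal{F}_y|$ increases by one. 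This keeps $\max_x |\mathcal{F}_x|$ (and hence $\dive(\mathcal{F})$) unchanged. For (b), once $\mathcal{F}$ contains no set of size $<k$, adjoining any missing set of size $\ge k+1$ cannot violate intersecting (their sizes sum to at least $2k+1 > n$), and by the vertex-transitivity of the upper layer, this addition increases every degree $|\mathcal{F}_x|$ by the same constant, so the diversity strictly gains $\sum_{s=k+1}^{2k-1} \binom{2k-1}{s}$.

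\emph{Step 2 (Splitting the diversity).} For a canonical family $\mathcal{F} = \binom{[2k]}{\ge k+1} \cup \mathcal{F}_k$, a direct calculation using Pascal's rule yields the clean split
$$\dive(\mathcal{F}) = \dive(\mathcal{F}_k) + \sum_{s=k+1}^{2k-1} \binom{2k-1}{s},$$
since the contribution of the upper layer to $|\mathcal{F}| - |\mathcal{F}_x|$ equals the $x$-independent constant $\sum_{s=k+1}^{2k-1} \binom{2k-1}{s}$. The Erd\H{o}s-Ko-Rado theorem gives $|\mathcal{F}_k| \le \binom{2k-1}{k-1}$, while the double count $\sum_{x\in[2k]} |(\mathcal{F}_k)_x| = k |\mathcal{F}_k|$ implies the average degree in $\mathcal{F}_k$ is $|\mathcal{F}_k|/2$, so $\max_x |(\mathcal{F}_k)_x| \ge \lceil |\mathcal{F}_k|/2 \rceil$ and consequently
$$\dive(\mathcal{F}_k) \le \lfloor |\mathcal{F}_k|/2 \rfloor \le \lfloor \binom{2k-1}{k-1}/2 \rfloor.$$
Kummer's theorem applied to the addition $(k-1)+k$ in base $2$ shows that $\binom{2k-1}{k-1}$ is odd if and only if $k$ is a power of $2$ (no carries occur precisely when the binary expansion of $k$ has a single $1$), which converts the floor into the two conjectured expressions.

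\emph{Main obstacle.} Sub-step (a) of the reduction is the crux. The naive lifting $A \mapsto A\cup\{y\}$ can fail in two degenerate configurations: when every candidate $y \in [2k]\setminus A$ is already a maximum-degree element of $\mathcal{F}$, or when $A \cup \{y\} \in \mathcal{F}$ for every such $y$ (forcing the move to degenerate into deletion, which may decrease diversity when the max is attained outside $A$). Both failures impose strong restrictions on $\mathcal{F}$---either a nearly constant degree profile on $c(A)$ or a saturated ``neighborhood'' of $A$ in $\mathcal{F}$---and the cleanest way forward is likely either to dispatch these specific configurations by a direct ad hoc bound or to design a global monovariant that combines $|\mathcal{F}|$, a penalty for the smallest set size, and a degree-imbalance term. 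Making this reduction watertight, especially in the power-of-$2$ regime where the integrality in Step 3 is tight, is the heart of the technical challenge.
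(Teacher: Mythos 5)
This statement is a \emph{conjecture} in the paper: the author offers no proof, only the heuristic reduction sketched in the surrounding discussion (EKR plus the average-degree bound $\dive(\mathcal{F}_k)\le \frac{1}{2}\binom{2k-1}{k-1}$, with the power-of-two distinction motivated by the Brace--Daykin and Ihringer--Kupavskii results on regular families) and a computer check for $n\le 6$. Your Steps 2 and 3 are correct but they essentially reproduce that same easy part: once $\mathcal{F}=\binom{[2k]}{\ge k+1}\cup\mathcal{F}_k$, the upper layer contributes the $x$-independent constant $\sum_{s=k+1}^{2k-1}\binom{2k-1}{s}$, EKR bounds $|\mathcal{F}_k|$, the averaging argument gives $\dive(\mathcal{F}_k)\le\lfloor|\mathcal{F}_k|/2\rfloor$, and your Kummer-type parity observation (no carries in $(k-1)+k$ exactly when $k$ is a power of $2$) correctly converts the floor into the two stated bounds. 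Step 1(b) is also fine, and in fact simpler than you make it: adding any set to an intersecting family never decreases $\min_x(|\mathcal{F}|-|\mathcal{F}_x|)$, and any set of size $\ge k+1$ may be added while keeping the family intersecting once all members have size $\ge k$, so no transitivity argument is needed.

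The genuine gap is Step 1(a), and it is not a technicality but the entire content of the conjecture. You must show that sets of size $<k$ can be eliminated without decreasing the diversity, and your lifting move $A\mapsto A\cup\{y\}$ only works when there is a $y\notin A$ that is not of maximum degree and satisfies $A\cup\{y\}\notin\mathcal{F}$; you name the two failure configurations yourself and then only gesture at ``an ad hoc bound or a global monovariant.'' Note that the naive alternatives also fail: deleting $A$ decreases $|\mathcal{F}|-|\mathcal{F}_x|$ for every $x\notin A$ and so can decrease diversity, and the crude bound one gets from the presence of a small set (every member meets $A$, so some element of $A$ has degree at least $|\mathcal{F}|/(k-1)$, giving $\dive(\mathcal{F})\le\frac{k-2}{k-1}|\mathcal{F}|\le\frac{k-2}{k-1}2^{2k-1}$) is roughly twice the conjectured value $2^{2k-2}-\frac{1}{2}\binom{2k-1}{k}+O(1)$, so small sets cannot be dismissed by counting alone. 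The paper itself only calls the canonical form ``plausible,'' which is a strong hint that this reduction is open; until it is established (including in the tight power-of-two regime), your argument proves the bound only for families that already have the canonical form, not the conjecture.
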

The validity of Conjecture \ref{conj_odd} and \ref{conj_even} has been checked using a computer for $n \le 6$.
~\\

\noindent \textbf{Acknowledgment. }The author would like to thank Peter Frankl and Andrey Kupavskii for their helpful comments and observation regarding the sharpness of Theorem \ref{thm_weaker}, and bringing up the reference \cite{ki}; and Jie Han for his significant contribution at the early stage of research.

\end{document}